\documentclass{imanum}

\jno{drnxxx}
\received{}
\revised{}
%\accepted{}

\usepackage[T1]{fontenc}
\usepackage{times}
\usepackage{epsfig,graphicx,tabularx,color}
\usepackage[cp850]{inputenc}
\usepackage[english]{babel}
\usepackage{amsmath,amssymb,amsfonts,amsthm,mathrsfs,comment,mathdots,multirow,tikz}
\usepackage{rotating, longtable, relsize}
\usepackage[f]{esvect}
\usepackage{times}
\usepackage{fancyhdr}
\usepackage{lipsum}
\usepackage{soul}

\usetikzlibrary{calc,3d}

\pagestyle{fancy}

%% Setting up pagestyles for ``fancy''
\lhead{\sl }
\rhead{\sl \thepage}
\lfoot{}
\cfoot{}
\rfoot{}

\theoremstyle{definition}

\def\XXint#1#2#3{{\setbox0=\hbox{$#1{#2#3}{\int}$}
     \vcenter{\hbox{$#2#3$}}\kern-.5\wd0}}

\def\pFq#1#2{{\,}_{#1}F_{#2}}

\def\ud{{\rm\,d}}

\def\C{\mathbb{C}}

\def\R{\mathbb{R}}
\def\Sph{\mathbb{S}}
\def\T{\mathbb{T}}

\def\LL{\mathcal{L}}

\def\OO{\mathcal{O}}

\newcommand{\bs}[1]{\boldsymbol{#1}}

\def\kb{\bs{k}}
\def\rb{\bs{r}}
\def\xb{\bs{x}}
\def\yb{\bs{y}}

\def\pr(#1){\left({#1}\right)}
\def\br[#1]{\left[{#1}\right]}

\def\abs#1{\left|{#1}\right|}

\def\pFq#1#2{{\,}_{#1}F_{#2}}

\def\ii{{\rm i}}

\newcommand{\expmo}{\operatorname{expm1}}
\newcommand{\logop}{\operatorname{log1p}}

\setulcolor{green}
\soulregister\cite7
\soulregister\ref7
\soulregister\pageref7

\DeclareRobustCommand{\rchi}{{\mathpalette\irchi\relax}}
\newcommand{\irchi}[2]{\raisebox{\depth}{$#1\chi$}} % inner command, used by \rchi

\bibliographystyle{IMANUM-BIB}

\begin{document}

\title{Fast and accurate algorithms for the computation of spherically symmetric nonlocal diffusion operators on lattices}
% Short title for running heads:
\shorttitle{Fourier spectra of nonlocal diffusion}

\author{%
{\sc Yu Li and Richard Mika\"el Slevinsky\thanks{Corresponding author. Email: Richard.Slevinsky@umanitoba.ca}}\\[2pt]
Department of Mathematics, University of Manitoba, Winnipeg, Canada}
% Short list of authors for running heads:
\shortauthorlist{Y. Li and R. M. Slevinsky}

\maketitle

\begin{abstract}
% Body of abstract:
{We present a unified treatment of the Fourier spectra of spherically symmetric nonlocal diffusion operators. We develop numerical and analytical results for the class of kernels with weak algebraic singularity as the distance between source and target tends to $0$. Rapid algorithms are derived for their Fourier spectra with the computation of each eigenvalue independent of all others. The algorithms are trivially parallelizable, capable of leveraging more powerful compute environments, and the accuracy of the eigenvalues is individually controllable. The algorithms include a Maclaurin series and a full divergent asymptotic series valid for any $d$ spatial dimensions. Using Drummond's sequence transformation, we prove linear complexity recurrence relations for degree-graded sequences of numerators and denominators in the rational approximations to the divergent asymptotic series. These relations are important to ensure that the algorithms are efficient, and also increase the numerical stability compared with the conventional algorithm with quadratic complexity.}
% Keywords:
{Nonlocal diffusion; asymptotic series; Drummond's sequence transformation.}
\end{abstract}

\section{Introduction}

Nonlocal models are important for their fidelity and versatility in handling a broad range of applications in material science, thermodynamics, fluid dynamics, and image processing (\cite{Silling-48-175-00,Gilboa-Osher-7-1005-09,Estrada-et-al-78-1155-18,Du-Tian-1805-08261}). Mathematically, nonlocal diffusion is usually formulated through weakly singular integral operators, as in~\cite{Du-et-al-54-667-12}. A nonlocal vector calculus developed by~\cite{Du-et-al-23-493-13} has extended the principles of nonlocal diffusion to the vector operators of divergence, gradient, and curl and their adjoints.

Let $u$ be a multivariate function defined on the $d$-dimensional torus $\T^d := [0,2\pi)^d$, and by periodic extension to $\R^d$. We define nonlocal diffusion as
\begin{equation}
\LL_\delta u(\xb)= \int_{B_\delta(\xb)}\rho_\delta(\abs{\xb-\yb})[u(\yb)-u(\xb)] \ud\yb.
\end{equation}
where $B_\delta(\xb) = \{\yb \in\R^d : \abs{\xb-\yb}\le \delta\}$ is the closed ball of radius $0<\delta<\infty$ centred at $\xb$ and the kernel $\rho_\delta$ is a nonnegative spherically symmetric function of the euclidean distance compactly supported on $[0,\delta]$. Nonlocal diffusion has also been analyzed in different geometries, including the real line by~\cite{Zheng-et-al-39-A1951-17} and the unit $2$-sphere by~\cite{Slevinsky-Montanelli-Du-372-893-18}.

In $1$, $2$, and $3$ spatial dimensions,~\cite{Du-Yang-332-118-17} have developed algorithms for the numerical evaluation of nonlocal diffusion of Fourier series. For each dimension, this is done by showing that the Fourier basis are eigenfunctions of $\LL_\delta$ and by computing the spectra numerically. Three algorithms are proposed for the numerical evaluation of the Fourier spectra. Firstly, a Maclaurin series is obtained for each eigenvalue and for each spatial dimension. This Maclaurin series is extremely effective for eigenmodes with euclidean norm on the wavenumber multiplied by the nonlocal horizon $\delta$ is sufficiently small. Secondly, the integrals defining the eigenvalues are approximated by the (slowly converging) multivariate midpoint rule. Thirdly, the eigenvalues are demonstrated to be pointwise evaluations of the solution of a linear ordinary differential equation (ODE) with suitable initial values, that is executed using the Runge--Kutta method (RK4). Each algorithm has advantages and disadvantages that lead Du and Yang to propose a hybrid algorithm based on the Maclaurin series and the time-stepping of the ODE.

In this work, we present a unified treatment of the Fourier spectra of spherically symmetric nonlocal diffusion operators. Some of the results of Du and Yang are generalized to hold in $d$ dimensions. A natural reduction of dimensionality is present for all spherically symmetric kernels. The one dimensional integral representation is useful on its own because there are many reasonable choices for the kernel $\rho_\delta$ and thus the Fourier spectra for any kernel in any number of spatial dimensions may be considered from the same analytical position. Following Du and Yang, we continue to develop numerical and analytical results for the class of kernels with weak algebraic singularity as the distance between source and target tends to $0$. Rapid algorithms are derived for the computation of the Fourier spectra of $\LL_\delta$ with the computation of each eigenvalue independent of all others. The algorithms are trivially parallelizable, capable of leveraging more powerful compute environments, and the accuracy of the eigenvalues is individually controllable. The algorithms include a Maclaurin series and a full divergent asymptotic series valid for any $d$ spatial dimensions. The choice of such formul\ae~is based on the comparative study for semi-infinite integrals by~\cite{Slevinsky-Safouhi-60-315-12}, whereby it is recommended that divergent asymptotic series are resummed through the use of sequence transformations.

Certain classes of sequence transformations construct rational approximations to formal power series (\cite{Drummond-6-69-72,Levin-B3-371-73,Sidi-7-37-81,Weniger-10-189-89}). Rational approximants, such as Pad\'e approximants, are useful tools in the theory of analytic continuation and the summation of divergent series. Using Drummond's sequence transformation, we prove linear complexity recurrence relations for degree-graded sequences of numerators and denominators in the rational approximants. These relations are important to ensure that the algorithms are efficient, and also increase the numerical stability compared with the conventional algorithm with quadratic complexity.

Numerical evidence confirms that a very simple hybrid algorithm between the Maclaurin series and the asymptotic series performs favourably: microsecond numerical evaluation to approximately full double precision is delivered.

\section{Reduction of dimensionality}

As a complete orthogonal basis for $L^2(\T^d)$, Fourier series are indispensable in numerical analysis and approximation theory. Fourier series also greatly simplify the computation of nonlocal diffusion, since a straightforward calculation translating coordinates demonstrates that the Fourier modes are periodic eigenfunctions of $\LL_\delta$ on $\T^d$. That is,
\begin{equation}
\LL_\delta e^{\ii \kb\cdot\xb} = \lambda_\delta(\kb)e^{\ii\kb\cdot\xb},
\end{equation}
where
\begin{equation}
\lambda_\delta(\kb) = \int_{B_\delta(\bs{0})} \rho_\delta(\abs{\rb})\left(e^{\ii \kb\cdot\rb}-1\right)\ud\rb.
\end{equation}

\begin{theorem}
Let $k=\abs{\kb}$ and $r=\abs{\rb}$. The Fourier spectra of nonlocal diffusion may be reduced to the following one-dimensional integral
\begin{equation}
\lambda_\delta(\kb) = \int_0^\delta \rho_\delta(r)\left[(2\pi)^{\frac{d}{2}}\frac{J_{\frac{d-2}{2}}(kr)}{(kr)^{\frac{d-2}{2}}} - \frac{2\pi^{\frac{d}{2}}}{\Gamma(\frac{d}{2})}\right]r^{d-1}\ud r,
\end{equation}
where $J_\nu(\cdot)$ is the cylindrical Bessel function of the first kind~\cite[\S 10.2.2]{Olver-et-al-NIST-10} of order $\nu$ and $\Gamma(\cdot)$ is the gamma function~\cite[\S 5.2(i)]{Olver-et-al-NIST-10}.
\end{theorem}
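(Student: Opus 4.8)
The plan is to exploit the spherical symmetry of $\rho_\delta$ by passing to spherical coordinates in $\R^d$, which decouples the radial integration (against $\rho_\delta$) from an angular integration of the plane wave $e^{\ii\kb\cdot\rb}$ over the unit sphere $\Sph^{d-1}$. First I would split the integrand as $e^{\ii\kb\cdot\rb}-1$ and treat the two pieces separately. Writing $\rb = r\bs{\omega}$ with $r=\abs{\rb}\in[0,\delta]$ and $\bs{\omega}\in\Sph^{d-1}$, the volume element becomes $\ud\rb = r^{d-1}\,\ud r\,\ud\sigma(\bs{\omega})$, where $\ud\sigma$ is the surface measure on $\Sph^{d-1}$.

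The subtracted term is immediate: since $\rho_\delta(r)$ is independent of $\bs{\omega}$, the angular integral contributes only the total surface measure $\abs{\Sph^{d-1}} = 2\pi^{d/2}/\Gamma(d/2)$, so that $\int_{B_\delta(\bs{0})}\rho_\delta(\abs{\rb})\,\ud\rb = \frac{2\pi^{d/2}}{\Gamma(d/2)}\int_0^\delta\rho_\delta(r)r^{d-1}\,\ud r$. This already produces the second bracketed term.

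The substantive step is the angular integral of the plane wave, $I(r) := \int_{\Sph^{d-1}} e^{\ii r\kb\cdot\bs{\omega}}\,\ud\sigma(\bs{\omega})$. By rotational invariance of $\ud\sigma$, this depends on $\kb$ only through $k=\abs{\kb}$, so I may align $\kb$ with the polar axis and write $\kb\cdot\bs{\omega} = k\cos\theta$. Integrating out the remaining $d-2$ angles (each slice at fixed polar angle $\theta$ being a scaled copy of $\Sph^{d-2}$) reduces $I(r)$ to $\frac{2\pi^{(d-1)/2}}{\Gamma(\frac{d-1}{2})}\int_0^\pi e^{\ii kr\cos\theta}\sin^{d-2}\theta\,\ud\theta$. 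The key analytic ingredient is the Poisson integral representation of the Bessel function, $\int_0^\pi e^{\ii z\cos\theta}\sin^{2\nu}\theta\,\ud\theta = \sqrt{\pi}\,\Gamma(\nu+\tfrac12)(2/z)^\nu J_\nu(z)$, applied with $\nu=\frac{d-2}{2}$ and $z=kr$; the odd (sine) part of $e^{\ii z\cos\theta}$ integrates to zero by symmetry about $\theta=\pi/2$. After simplifying the gamma factors and the power of two, this yields $I(r) = (2\pi)^{d/2}J_{(d-2)/2}(kr)/(kr)^{(d-2)/2}$, which is exactly the first bracketed term.

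Combining the two pieces under the single radial integral $\int_0^\delta\rho_\delta(r)(\cdots)r^{d-1}\,\ud r$ gives the claimed formula. The main obstacle is the evaluation of $I(r)$; everything else is bookkeeping with spherical coordinates. I expect the only remaining subtleties to be the correct normalization of the surface measure and verifying that the weak algebraic singularity of $\rho_\delta$ at $r=0$ is integrable against $r^{d-1}$, so that Fubini's theorem justifies the interchange of radial and angular integration.
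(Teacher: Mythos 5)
Your key analytic ingredient---Poisson's integral representation of $J_\nu$---is sound, and your evaluation of the angular integral is correct; but the very first step of your proof, splitting $e^{\ii\kb\cdot\rb}-1$ into two separately integrated pieces over $B_\delta(\bs{0})$, fails for precisely the kernels this paper cares about. Each of your two $d$-dimensional integrals requires $\int_0^\delta\rho_\delta(r)r^{d-1}\ud r<\infty$: the subtracted piece is exactly $\abs{\Sph^{d-1}}\int_0^\delta\rho_\delta(r)r^{d-1}\ud r$, and the plane-wave piece is no better near the origin since $e^{\ii\kb\cdot\rb}\to1$ there (equivalently, $(2/(kr))^{\frac{d-2}{2}}J_{\frac{d-2}{2}}(kr)$ tends to the nonzero constant $1/\Gamma(\frac d2)$ as $r\to0$). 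For the paper's algebraic kernels $\rho_\delta(r)\propto r^{-\alpha}$ this forces $\alpha<d$, whereas the admissible range is $\alpha\in[0,d+2)$; for $\alpha\in[d,d+2)$ both of your pieces are infinite and ``combining the two pieces under a single radial integral'' is an $\infty-\infty$ statement. You flag the needed hypothesis yourself (``the singularity of $\rho_\delta$ is integrable against $r^{d-1}$''), but that hypothesis is false in general---it is exactly the ``undue constraint'' the paper warns against later when explaining why the two terms on the right-hand side of Eq.~\eqref{eq:FourierSpectra} must not be integrated independently. The finiteness of $\lambda_\delta(\kb)$ comes only from the cancellation inside the bracket, which is $O\big((kr)^2\big)$ as $r\to0$, making the combined integrand $O(r^{d+1-\alpha})$.

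The gap is fixable by reordering operations, which is what the paper's proof does: pass to hyperspherical coordinates keeping $e^{\ii kr\cos\theta}-1$ intact, evaluate the \emph{angular} integral first---for each fixed $r>0$ this is an integral of a bounded function over a compact set, so there splitting off the constant is harmless---and only then integrate the resulting bracket against $\rho_\delta(r)r^{d-1}\ud r$. With that reordering, your route is a legitimate and arguably cleaner alternative to the paper's: the paper expands the plane wave in ultraspherical polynomials $C_n^{(\frac{d-2}{2})}(\cos\theta)$ and invokes weighted orthogonality to kill all terms but $n=0$, which forces a separate Chebyshev limiting argument at $d=2$; Poisson's formula
\begin{equation}
\int_0^\pi e^{\ii z\cos\theta}\sin^{2\nu}\theta\ud\theta = \sqrt{\pi}\,\Gamma\!\left(\nu+\tfrac12\right)\left(\frac{2}{z}\right)^{\nu}J_\nu(z)
\end{equation}
gives the same result in one stroke and is valid at $\nu=0$, i.e.\ $d=2$, with no special pleading. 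Note, however, that it requires $\nu=\frac{d-2}{2}>-\frac12$, so $d=1$ still needs the same trivial direct verification the paper performs: $\Sph^{0}$ is two points, the angular ``integral'' is $e^{\ii kr}+e^{-\ii kr}=2\cos(kr)$, consistent with $J_{-1/2}$.
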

\begin{proof}
The scalar product $\kb\cdot\rb = k r \cos\theta$ may be parameterized by the moduli of the vectors $\kb$ and $\rb$ and the angle $\theta\in[0,\pi]$ between them. As the integral is spherically symmetric, we rotate our frame of reference such that $\kb$ is oriented in the direction of the North hyperpole. Then, we may use hyperspherical coordinates with $r\in(0,\delta)$, $\theta = \theta_1$ and $\theta_i\in[0,\pi]$ for $i = 1,\ldots,d-2$, and $\theta_{d-1}\in[0,2\pi)$ to separate the multidimensional integrals defining $\lambda_\delta(\kb)$. Then
\begin{equation}
\lambda_\delta(\kb) = \int_0^{2\pi}\ud\theta_{d-1}\int_0^\pi\sin(\theta_{d-2})\ud\theta_{d-2}\cdots\int_0^\pi\sin^{d-2}(\theta)\int_0^\delta \rho_\delta(r)\left(e^{\ii kr\cos\theta}-1\right)r^{d-1}\ud r\ud\theta.
\end{equation}
All but the last angular integral may be represented by gamma functions~\cite[\S 
5.12.2]{Olver-et-al-NIST-10}
\begin{equation}
\int_0^\pi \sin^{d-1}\theta\ud\theta = \frac{\Gamma(\frac{1}{2})\Gamma(\frac{d}{2})}{\Gamma(\frac{d+1}{2})}.
\end{equation}
Then, by canceling common gamma functions appearing in the numerator and denominator,
\begin{align}
\lambda_\delta(\kb) & = 2\pi \frac{\Gamma(\frac{1}{2})\Gamma(\frac{2}{2})}{\Gamma(\frac{3}{2})} \frac{\Gamma(\frac{1}{2})\Gamma(\frac{3}{2})}{\Gamma(\frac{4}{2})} \cdots \frac{\Gamma(\frac{1}{2})\Gamma(\frac{d-2}{2})}{\Gamma(\frac{d-1}{2})}\int_0^\pi\sin^{d-2}(\theta)\int_0^\delta \rho_\delta(r)\left(e^{\ii kr\cos\theta}-1\right)r^{d-1}\ud r\ud\theta,\\
& = \frac{2\pi^{\frac{d-1}{2}}}{\Gamma(\frac{d-1}{2})}\int_0^\pi\sin^{d-2}(\theta)\int_0^\delta \rho_\delta(r)\left(e^{\ii kr\cos\theta}-1\right)r^{d-1}\ud r\ud\theta.
\end{align}
For the final angular integral, we use the convergent ultraspherical expansion of the plane wavefunction~\cite[\S 10.23.9]{Olver-et-al-NIST-10} and integrate term by term
\begin{align}
&\int_0^\pi \sin^{d-2}(\theta)\left(e^{\ii kr\cos\theta}-1\right)\ud\theta\nonumber\\
& = \int_0^\pi \sin^{d-2}(\theta)\left[2^{\frac{d-2}{2}}\Gamma(\tfrac{d-2}{2})\sum_{n=0}^\infty(\tfrac{d-2}{2}+n)\ii^n\frac{J_{\frac{d-2}{2}+n}(kr)}{(kr)^{\frac{d-2}{2}}}C_n^{(\frac{d-2}{2})}(\cos\theta) - 1\right]\ud\theta,\\
& = 2^{\frac{d-2}{2}}\Gamma(\tfrac{d-2}{2})\sum_{n=0}^\infty(\tfrac{d-2}{2}+n)\ii^n\frac{J_{\frac{d-2}{2}+n}(kr)}{(kr)^{\frac{d-2}{2}}}\int_0^\pi \sin^{d-2}(\theta)C_n^{(\frac{d-2}{2})}(\cos\theta)\ud\theta - \int_0^\pi \sin^{d-2}(\theta)\ud\theta.
\end{align}
By (weighted) orthogonality of the ultraspherical polynomials, every one of the integrals is $0$ except for the case $n=0$, completing the proof. For $d=1$, the ultraspherical expansion is unnecessary and for $d=2$, we may not use the ultraspherical expansion {\em per se}, but only by taking the limit as $d\to2$. In this case, the Chebyshev expansion of the plane wavefunction~\cite[\S 10.12.3]{Olver-et-al-NIST-10},
\begin{equation}
e^{\ii kr\cos\theta} = J_0(kr) + 2\sum_{n=1}^\infty \ii^n J_n(kr) T_n(\cos\theta),
\end{equation}
appears instead.
\end{proof}

An important class of kernels have a weak singularity as $\abs{\xb-\yb}\searrow0$. These kernels are also known to ensure strong convergence to local diffusion given by the Laplacian when the horizon $\delta\searrow0$, see~\cite{Du-Yang-332-118-17}, provided they are normalized by
\begin{equation}
\int_0^\delta \rho_\delta(r)r^{d+1}\ud r = \frac{2}{V_d} = \frac{2\Gamma(\frac{d}{2}+1)}{\pi^{\frac{d}{2}}},
\end{equation}
where $V_d$ is the volume of the $d$-dimensional unit hypersphere $\Sph^{d-1}\subset\R^d$. Motivated by the above, our choice of kernel is
\begin{equation}
\rho_\delta(r) = \frac{2\Gamma(\frac{d}{2}+1)(d+2-\alpha)}{\pi^{\frac{d}{2}}\delta^{d+2-\alpha}r^\alpha}\rchi_{[0,\delta]}(r),\quad{\rm for}\quad \alpha\in[0,d+2).
\end{equation}
Here $\rchi_{[0,\delta]}(\cdot)$ is the indicator function.

With this particular choice of a kernel, the Fourier spectra are
\begin{equation}\label{eq:FourierSpectra}
\lambda_\delta(\kb) = \frac{4\Gamma(\frac{d}{2}+1)(d+2-\alpha)}{\delta^{d+2-\alpha}} \int_0^\delta \left[\left(\frac{2}{kr}\right)^{\frac{d-2}{2}}J_{\frac{d-2}{2}}(kr) - \frac{1}{\Gamma(\frac{d}{2})}\right]r^{d-1-\alpha}\ud r.
\end{equation}

\subsection{Maclaurin series}

By using the cylindrical Bessel function's convergent Frobenius series~\cite[\S 10.2.2]{Olver-et-al-NIST-10},
\begin{equation}
\left(\frac{2}{kr}\right)^{\frac{d-2}{2}}J_{\frac{d-2}{2}}(kr) = \sum_{n=0}^\infty\frac{(-k^2r^2/4)^n}{n!\Gamma(n+\frac{d}{2})},
\end{equation}
the constant term on the right-hand side of Eq.~\eqref{eq:FourierSpectra} exactly cancels, resulting in
\begin{equation}
\lambda_\delta(\kb) = \frac{4\Gamma(\frac{d}{2}+1)(d+2-\alpha)}{\delta^{d+2-\alpha}} \int_0^\delta \sum_{n=1}^\infty\frac{(-k^2r^2/4)^n}{n!\Gamma(n+\frac{d}{2})}r^{d-1-\alpha}\ud r.
\end{equation}
Integrating term by term, we find the Maclaurin series
\begin{equation}\label{eq:MaclaurinSpectra}
\lambda_\delta(\kb) = \frac{4\Gamma(\tfrac{d}{2}+1)}{\delta^2}\sum_{n=1}^\infty \dfrac{(-k^2\delta^2/4)^n}{n!\Gamma(n+\frac{d}{2})}\frac{d+2-\alpha}{d+2n-\alpha}.
\end{equation}
At this point, we note that this series has been reported previously by~\cite{Du-Yang-332-118-17} for the dimensions $d=1,2,3$, and the difference in appears is because Eq.~\eqref{eq:MaclaurinSpectra} is valid for any number of dimensions.

As for algorithmic considerations, alternating series with terms with possibly large magnitude suffer from subtractive cancellation. In the case of Eq.~\eqref{eq:MaclaurinSpectra}, this corresponds to a large $k\delta$, which may occur for any fixed $\delta$ so long as the eigenvalues of sufficiently high Fourier modes are requested.

Before continuing, it will be fruitful to re-express Eq.~\eqref{eq:MaclaurinSpectra} as a generalized hypergeometric function. Let
\begin{equation}
(x)_n = \frac{\Gamma(x+n)}{\Gamma(x)},
\end{equation}
be the Pochhammer symbol for the rising factorial~\cite[\S 5.2(iii)]{Olver-et-al-NIST-10}. Generalized hypergeometric functions are a class of functions that are formally defined by their Maclaurin series
\begin{equation}
\pFq{p}{q}\left(\begin{array}{c}
a_1,\ldots,a_p\\
b_1,\ldots,b_q\end{array};
z\right) = \sum_{n=0}^\infty \dfrac{(a_1)_n\cdots(a_p)_n}{(b_1)_n\cdots(b_q)_n}\dfrac{z^n}{n!}.
\end{equation}
When one of the $a_i$ is a non-positive integer, then the series terminates after $a_i+1$ terms. Otherwise, the series has radius of convergence: $0$ if $p > q+1$; $1$ if $p = q+1$; and, $\infty$ if $p \le q$.
In the formalism of generalized hypergeometric functions, it is straightforward to confirm that
\begin{equation}\label{eq:FourierSpectraMAC}
\lambda_\delta(\kb) = -k^2\pFq{2}{3}\left(\begin{array}{c} 1, \tfrac{d+2-\alpha}{2}\\2, \tfrac{d+2}{2},\tfrac{d+4-\alpha}{2}\end{array}; -\frac{k^2\delta^2}{4}\right).
\end{equation}
Since $0<\alpha<d+2$, the representation as generalized hypergeometric functions is not terminating.

\subsection{Asymptotic series}

It is important to have a $d$-dimensional generalization of the Fourier spectra of nonlocal diffusion. However, the main computational issue is that as $k\to\infty$, the Maclaurin series is numerically ineffective. Starting with Eq.~\eqref{eq:FourierSpectra}, we develop an alternative formula with a full asymptotic expansion. It is tempting to find formul\ae~for each of the two integrals on the right-hand side of Eq.~\eqref{eq:FourierSpectra} independently; however, by ensuring integrability of each integrand, respectively, we retain an undue constraint on the strength of the algebraic singularity, $\alpha\in[0,d)$ rather than $\alpha\in[0,d+2)$. This forces us to use an alternative derivation valid for the interval $\alpha\in(\tfrac{d-1}{2},d+2)$, and piece together the result.

\subsubsection{A formula valid for $\alpha\in[0,d)$}

By using~\cite[\S 6.561 13.]{Gradshteyn-Ryzhik-07},
\begin{equation}\label{eq:PowerBesselIntegral}
\int_0^1 x^\mu J_\nu(ax)\ud x = \dfrac{2^\mu\Gamma(\frac{\nu+\mu+1}{2})}{a^{\mu+1}\Gamma(\frac{\nu-\mu+1}{2})} + a^{-\mu}\left[(\mu+\nu-1)J_\nu(a)S_{\mu-1,\nu-1}(a)-J_{\nu-1}(a)S_{\mu,\nu}(a)\right],
\end{equation}
where $S_{\mu,\nu}(\cdot)$ is a Lommel function~\cite[\S11.9]{Olver-et-al-NIST-10} and which is valid for $a>0$ and $\Re(\mu+\nu)>-1$, we may express Eq.~\eqref{eq:FourierSpectra} as
\begin{subequations}\label{eq:FourierSpectraASY}
\begin{align}
& \frac{\delta^2\lambda_\delta(\kb)}{2\Gamma(\frac{d}{2}+1)(d+2-\alpha)} = \left(\frac{k\delta}{2}\right)^{\alpha-d}\frac{\Gamma(\frac{d-\alpha}{2})}{\Gamma(\frac{\alpha}{2})} - \frac{2}{(d-\alpha)\Gamma(\frac{d}{2})}\label{eq:FourierSpectraASYa}\\
& \quad\quad + 2^{\frac{d}{2}}(k\delta)^{\alpha+1-d}\left[(d-2-\alpha)J_{\frac{d-2}{2}}(k\delta)S_{\frac{d-2-2\alpha}{2},\frac{d-4}{2}}(k\delta) - J_{\frac{d-4}{2}}(k\delta)S_{\frac{d-2\alpha}{2},\frac{d-2}{2}}(k\delta)\right].\label{eq:FourierSpectraASYb}
\end{align}
\end{subequations}
Eq.~\eqref{eq:FourierSpectraASY} separates the Fourier spectra into the difference of the ratio of gamma functions in Eq.~\eqref{eq:FourierSpectraASYa} and a scaled sum of Bessel and Lommel functions in Eq.~\eqref{eq:FourierSpectraASYb}. In particular, the Bessel and Lommel functions are analytic functions on any subset of the positive real axis, independent of the order and parameters. Therefore, Eq.~\eqref{eq:FourierSpectraASYa}, with its difference of singularities as $\alpha\to d^-$, is the only part of the formula that challenges a pedestrian analytic continuation.

We close this section by noting that the Lommel functions appearing in Eq.~\eqref{eq:FourierSpectraASYb} have an asymptotic expansion~\cite[\S 8.576]{Gradshteyn-Ryzhik-07} given by
\begin{align}
S_{\mu,\nu}(z) & \sim z^{\mu-1}\sum_{n=0}^\infty (-1)^n \left(\frac{1-\mu+\nu}{2}\right)_n\left(\frac{1-\mu-\nu}{2}\right)_n \left(\frac{2}{z}\right)^{2n},\quad{\rm as}\quad z\to\infty,\label{eq:LommelS2}\\
& = z^{\mu-1}\pFq{3}{0}\left(\begin{array}{c} 1, \dfrac{1-\mu+\nu}{2}, \dfrac{1-\mu-\nu}{2}\end{array};-\frac{4}{z^2}\right).
\end{align}

\subsubsection{A formula valid for $\alpha\in(\tfrac{d-1}{2},d+2)$}

Eq.~\eqref{eq:FourierSpectra} may be written as
\begin{align}
\frac{\delta^{d+2-\alpha}\lambda_\delta(\kb)}{4\Gamma(\frac{d}{2}+1)(d+2-\alpha)} & = \int_0^\infty \left[\left(\frac{2}{kr}\right)^{\frac{d-2}{2}}J_{\frac{d-2}{2}}(kr) - \frac{\rchi_{[0,\delta]}(r)}{\Gamma(\frac{d}{2})}\right]r^{d-1-\alpha}\ud r\nonumber\\
& \quad - \int_\delta^\infty \left(\frac{2}{kr}\right)^{\frac{d-2}{2}}J_{\frac{d-2}{2}}(kr)r^{d-1-\alpha}\ud r.
\end{align}
Next, we introduce an $\epsilon$-limit in the first integral, weakening the singularity at the origin and thereby allowing integrals to be separated,
\begin{align}
\frac{\delta^{d+2-\alpha}\lambda_\delta(\kb)}{4\Gamma(\frac{d}{2}+1)(d+2-\alpha)} & = \lim_{\epsilon\to0^+}\int_0^\infty \left[\left(\frac{2}{kr}\right)^{\frac{d-2}{2}}J_{\frac{d-2}{2}}(kr) - \frac{\rchi_{[0,\delta]}(r)}{\Gamma(\frac{d}{2})}\right](r+\epsilon)^{d-1-\alpha}\ud r\nonumber\\
& \quad - \int_\delta^\infty \left(\frac{2}{kr}\right)^{\frac{d-2}{2}}J_{\frac{d-2}{2}}(kr)r^{d-1-\alpha}\ud r,\\
& = \lim_{\epsilon\to0^+}\left[\int_0^\infty \left(\frac{2}{kr}\right)^{\frac{d-2}{2}}J_{\frac{d-2}{2}}(kr)(r+\epsilon)^{d-1-\alpha}\ud r - \int_0^\delta\frac{(r+\epsilon)^{d-1-\alpha}}{\Gamma(\frac{d}{2})}\ud r\right]\nonumber\\
& \quad - \int_\delta^\infty \left(\frac{2}{kr}\right)^{\frac{d-2}{2}}J_{\frac{d-2}{2}}(kr)r^{d-1-\alpha}\ud r.
\end{align}

We utilize~\cite[\S 6.563]{Gradshteyn-Ryzhik-07},
\begin{align}
\int_0^\infty x^{\varrho-1}J_\nu(ax)\frac{\ud x}{(x+\epsilon)^{1+\mu}}=&\frac{\pi \epsilon^{\varrho-\mu-1}}{\sin[(\varrho+\nu-\mu)\pi]\Gamma(\mu+1)}\nonumber\\
&\times\left[\sum_{n=0}^{\infty}\frac{(-1)^n (\frac{1}{2}a\epsilon)^{\nu+2n}\Gamma(\varrho+\nu+2n)}{n!\Gamma(\nu+n+1)\Gamma(\varrho+\nu-\mu+2n)}\right. \\
&- \left.\sum_{n=0}^{\infty} \frac{(\frac{1}{2}a\epsilon)^{\mu+1-\varrho+n}\Gamma(\mu+n+1)\sin [ \frac{1}{2}(\varrho+\nu-\mu-n)\pi ]}{n!\Gamma [ \frac{1}{2}(\mu+\nu-\varrho+n+3) ] \Gamma [ \frac{1}{2}(\mu-\nu-\varrho+n+3) ] }\right]\nonumber
\end{align}
where $a>0$, $|\arg \epsilon| < \pi$, $\Re(\varrho+\nu)>0$, and $\Re(\varrho-\mu)<\frac{5}{2}$, with $\varrho = 2-\frac{d}{2}$, $\nu=\frac{d-2}{2}$, $\mu = \alpha-d$.

Since
\begin{equation}
\int_0^\delta\frac{(r+\epsilon)^{d-1-\alpha}}{\Gamma(\frac{d}{2})}\ud r = \frac{(\delta+\epsilon)^{d-\alpha}}{(d-\alpha)\Gamma(\frac{d}{2})} - \frac{\epsilon^{d-\alpha}}{(d-\alpha)\Gamma(\frac{d}{2})},
\end{equation}
we may take the limit
\begin{align}
\lim_{\epsilon\to0^+}\int_0^\infty &\left[\left(\frac{2}{kr}\right)^{\frac{d-2}{2}}J_{\frac{d-2}{2}}(kr) - \frac{\rchi_{[0,\delta]}(r)}{\Gamma(\frac{d}{2})}\right](r+\epsilon)^{d-1-\alpha}\ud r\nonumber\\
& = \lim_{\epsilon\to0^+} \left\{\left(\frac{2}{k}\right)^{\frac{d-2}{2}}\frac{\pi \epsilon^{\frac{d+2}{2}-\alpha}}{\sin[(d-\alpha+1)\pi]\Gamma(\alpha-d+1)}\right.\nonumber\\
&\times\left[\sum_{n=0}^{\infty}\frac{(-1)^n (\frac{1}{2}k\epsilon)^{\frac{d-2}{2}+2n}\Gamma(2n+1)}{n!\Gamma(n+\frac{d}{2})\Gamma(2n+d-\alpha+1)}\right. \\
&- \left.\sum_{n=0}^{\infty} \frac{(\frac{1}{2}k\epsilon)^{\alpha-\frac{d+2}{2}+n}\Gamma(n+\alpha-d+1)\sin [ \frac{1}{2}(d-\alpha+1-n)\pi ]}{n!\Gamma [ \frac{1}{2}(n+\alpha) ] \Gamma [ \frac{1}{2}(n+\alpha-d+2) ] }\right]\nonumber\\
& \left.- \frac{(\delta+\epsilon)^{d-\alpha}}{(d-\alpha)\Gamma(\frac{d}{2})} + \frac{\epsilon^{d-\alpha}}{(d-\alpha)\Gamma(\frac{d}{2})}\right\}.\nonumber
\end{align}
In the first infinite series, the power of $\epsilon$ of the $n^{\rm th}$ term is $d-\alpha+2n$. Since $\alpha<d+2$, the zeroth term is singular, and every other power of $\epsilon$ is positive. In the second infinite series, the power of $\epsilon$ of the $n^{\rm th}$ term is $n$, ensuring that only the $n=0$ terms remain in the limit. Therefore
\begin{align}
\lim_{\epsilon\to0^+}\int_0^\infty &\left[\left(\frac{2}{kr}\right)^{\frac{d-2}{2}}J_{\frac{d-2}{2}}(kr) - \frac{\rchi_{[0,\delta]}(r)}{\Gamma(\frac{d}{2})}\right](r+\epsilon)^{d-1-\alpha}\ud r\nonumber\\
& = \lim_{\epsilon\to0^+} \left\{\left(\frac{2}{k}\right)^{\frac{d-2}{2}}\frac{\pi \epsilon^{\frac{d+2}{2}-\alpha}}{\sin[(d-\alpha+1)\pi]\Gamma(\alpha-d+1)}\right.\nonumber\\
&\times\left[\frac{(\frac{1}{2}k\epsilon)^{\frac{d-2}{2}}}{\Gamma(\frac{d}{2})\Gamma(d-\alpha+1)} - \frac{(\frac{1}{2}k\epsilon)^{\alpha-\frac{d+2}{2}}\Gamma(\alpha-d+1)\sin [ \frac{1}{2}(d-\alpha+1)\pi ]}{\Gamma(\frac{\alpha}{2}) \Gamma [ \frac{1}{2}(\alpha-d+2) ] }\right]\\
& \left.- \frac{(\delta+\epsilon)^{d-\alpha}}{(d-\alpha)\Gamma(\frac{d}{2})} + \frac{\epsilon^{d-\alpha}}{(d-\alpha)\Gamma(\frac{d}{2})}\right\}.\nonumber
\end{align}
Simplifying further,
\begin{align}
\lim_{\epsilon\to0^+}\int_0^\infty &\left[\left(\frac{2}{kr}\right)^{\frac{d-2}{2}}J_{\frac{d-2}{2}}(kr) - \frac{\rchi_{[0,\delta]}(r)}{\Gamma(\frac{d}{2})}\right](r+\epsilon)^{d-1-\alpha}\ud r\nonumber\\
& = \lim_{\epsilon\to0^+} \left\{\frac{\pi\epsilon^{d-\alpha}}{\sin[(d-\alpha+1)\pi]\Gamma(\alpha-d+1)\Gamma(\frac{d}{2})\Gamma(d-\alpha+1)} + \frac{\epsilon^{d-\alpha}}{(d-\alpha)\Gamma(\frac{d}{2})}\right.\\
& \left. - \left(\frac{k}{2}\right)^{\alpha-d} \frac{\pi \sin[\frac{1}{2}(d-\alpha+1)\pi]}{\sin[(d-\alpha+1)\pi]\Gamma(\frac{\alpha}{2})\Gamma[\frac{1}{2}(\alpha-d+2)]} - \frac{(\delta+\epsilon)^{d-\alpha}}{(d-\alpha)\Gamma(\frac{d}{2})}\right\}.\nonumber
\end{align}
By using the reflection formul\ae~\cite[\S 8.334~2.~\&~3.]{Gradshteyn-Ryzhik-07} for the gamma function,
\begin{align}
\lim_{\epsilon\to0^+}\int_0^\infty &\left[\left(\frac{2}{kr}\right)^{\frac{d-2}{2}}J_{\frac{d-2}{2}}(kr) - \frac{\rchi_{[0,\delta]}(r)}{\Gamma(\frac{d}{2})}\right](r+\epsilon)^{d-1-\alpha}\ud r\nonumber\\
& = \lim_{\epsilon\to0^+} \left\{\frac{\epsilon^{d-\alpha}}{(\alpha-d)\Gamma(\frac{d}{2})} + \frac{\epsilon^{d-\alpha}}{(d-\alpha)\Gamma(\frac{d}{2})}\right.\\
& \left. + \left(\frac{k}{2}\right)^{\alpha-d} \frac{ \Gamma(\frac{d-\alpha}{2})}{2\Gamma(\frac{\alpha}{2})} - \frac{(\delta+\epsilon)^{d-\alpha}}{(d-\alpha)\Gamma(\frac{d}{2})}\right\}\nonumber\\
& = \left(\frac{k}{2}\right)^{\alpha-d} \frac{ \Gamma(\frac{d-\alpha}{2})}{2\Gamma(\frac{\alpha}{2})} - \frac{\delta^{d-\alpha}}{(d-\alpha)\Gamma(\frac{d}{2})}.
\end{align}
Multiplying by $2\delta^{\alpha-d}$, we arrive at Eq.~\eqref{eq:FourierSpectraASYa}.

Next, we use the formula
\begin{equation}
\int_1^\infty x^\mu J_\nu(ax)\ud x = - a^{-\mu}\left[(\mu+\nu-1)J_\nu(a)S_{\mu-1,\nu-1}(a)-J_{\nu-1}(a)S_{\mu,\nu}(a)\right],
\end{equation}
valid for $a>0$ and $\Re(\mu)<\frac{1}{2}$, to arrive at Eq.~\eqref{eq:FourierSpectraASYb}. Compare~\cite[\S 6.561~13.~\&~14.]{Gradshteyn-Ryzhik-07} and see the discussion in~\cite[\S 10.74]{Watson-66} that relates Lommel functions to the indefinite integration of Bessel functions with an algebraic power. The restriction on $\mu$ is equivalent to $\alpha>\frac{d-1}{2}$.

\subsubsection{The removable singularity}
Notice that
\begin{equation}
\left(\frac{k\delta}{2}\right)^{\alpha-d}\frac{\Gamma(\frac{d-\alpha}{2})}{\Gamma(\frac{\alpha}{2})} - \frac{2}{(d-\alpha)\Gamma(\frac{d}{2})} = \frac{2}{(d-\alpha)\Gamma(\frac{\alpha}{2})}\left[\left(\frac{k\delta}{2}\right)^{\alpha-d}\Gamma(\tfrac{d-\alpha}{2}+1) - \frac{\Gamma(\frac{\alpha}{2})}{\Gamma(\frac{d}{2})}\right].
\end{equation}
This reorganization delays the onset of the pole field generated by the gamma function in the numerator by precisely two units in $\alpha$. By l'H\^opital's rule,
\begin{equation}
\lim_{\alpha\to d}\frac{2}{(d-\alpha)\Gamma(\frac{\alpha}{2})}\left[\left(\frac{k\delta}{2}\right)^{\alpha-d}\Gamma(\tfrac{d-\alpha}{2}+1) - \frac{\Gamma(\frac{\alpha}{2})}{\Gamma(\frac{d}{2})}\right] = \frac{1}{\Gamma(\frac{d}{2})}\left[2\log\left(\frac{2}{k\delta}\right) + \Gamma'(1) + \frac{\Gamma'(\frac{d}{2})}{\Gamma(\frac{d}{2})}\right].
\end{equation}
With a finite limit, we conclude that both sides of Eq.~\eqref{eq:FourierSpectraASY} are analytic functions of $\alpha\in[0,d+2)$.

\begin{remark}
While the lower bound on the interval of $\alpha$ is proposed to ensure that the algebraic kernel is indeed singular as $r\to0^+$, there is no issue of convergence in extending Eq.~\eqref{eq:FourierSpectraASY} to the interval $\alpha\in(-\infty,d)$. Since Eq.~\eqref{eq:FourierSpectraASY}, as a function of $\alpha$, is comprised of exponentials, gamma functions, Bessel functions, and Lommel functions, all of which satisfy linear homogeneous recurrence relations, the existence of a recurrence relation provides an alternative method to analytically continue Eq.~\eqref{eq:FourierSpectraASY} to the necessary parameter range $\alpha\in[0,d+2)\subset (-\infty,d+2)$. Since $\alpha\in(-\infty,d)$ provides infinitely-many initial conditions to this recurrence relation, there can be no other solution than Eq.~\eqref{eq:FourierSpectraASY}.
\end{remark}

\section{Numerical evaluation of $\lambda_\delta(\kb)$}

\subsection{Numerical evaluation of Eq.~\eqref{eq:FourierSpectraMAC}}

The Maclaurin series in Eq.~\eqref{eq:FourierSpectraMAC} is evaluated numerically as any generalized hypergeometric function. Since $k\delta > 0$, the series is alternating and the magnitude of the first omitted term is a bound on the absolute error. The partial sums of the series itself are tallied as an approximation to the sum and permit the implementation of a relative criterion for convergence. We stop the summation when the addition of the next term in the series is less than $10\epsilon_{\rm mach}\approx 2.22\times10^{-15}$ multiplied by the magnitude of partial sum of the series.

\subsection{Numerical evaluation of Eq.~\eqref{eq:FourierSpectraASYa}}

In this section, we discuss the numerical evaluation of the function
\begin{equation}
f(x,y,z) = \left[\dfrac{y^{2x}\Gamma(x+1)\Gamma(z)}{\Gamma(z-x)}-1\right]\Bigg/x,
\end{equation}
as it pertains to Eq.~\eqref{eq:FourierSpectraASYa} after setting $x = \frac{d-\alpha}{2}$, $y = \frac{k\delta}{2}>0$, and $z=\frac{d}{2}>0$. As $x\to0$, there is the possibility for subtractive cancellation. We also note the limiting value
\begin{equation}
\lim_{x\to0}f(x,y,z) = 2\log(y) + \frac{\Gamma'(1)}{\Gamma(1)} + \frac{\Gamma'(z)}{\Gamma(z)}.
\end{equation}
We implement the logarithmic derivative of the gamma function via the digamma function~\cite[\S 5.2(i)]{Olver-et-al-NIST-10}, $\psi(z)$.
To deal with the numerical instability as $x\to0$, we adapt a method due to~\cite{Michel-Stoitsov-178-535-08} that was part of a polyalgorithm for the numerical evaluation of Gauss' hypergeometric function. In the first instance, we rely on the fact that many programming languages have a special implementation of $\expmo(x) = \exp(x)-1$ and $\logop(x) = \log(1+x)$. Thus
\begin{equation}
f(x,y,z) = \expmo\left\{\log\left[\dfrac{y^{2x}\Gamma(x+1)\Gamma(z)}{\Gamma(z-x)}\right]\right\}\Bigg/x.
\end{equation}
It is important to note that for $d=1,2,\ldots$, and for $\alpha\in[0,d+2)$, the permissible values of $z-x\in[0,\tfrac{d+2}{2})$ and $x+1\in(0,\tfrac{d+2}{2}]$, which means we do not need to worry about numerical evaluation near any other singularity other than as $x\to0$.

Since
\begin{equation}
\log\frac{y^{2x}\Gamma(x+1)\Gamma(z)}{\Gamma(z-x)} = 2x\log(y) + \log\frac{\Gamma(1+x)}{\Gamma(1)} - \log\frac{\Gamma(z-x)}{\Gamma(z)},
\end{equation}
we use the Lanczos approximation~\cite{Lanczos-1-86-64},
\begin{equation}\label{eq:Lanczos}
\Gamma(z+1) \approx \sqrt{2\pi}\left(z+\gamma+\frac{1}{2}\right)^{z+\frac{1}{2}}e^{-(z+\gamma+\frac{1}{2})}\left(c_0+\sum_{i=1}^n\frac{c_i}{z+i}\right),
\end{equation}
for the logarithm of the ratio of gamma functions
\begin{align}
\log\frac{\Gamma(z+1+\epsilon)}{\Gamma(z+1)} & \approx \left(z+\frac{1}{2}\right)\logop\left(\frac{\epsilon}{z+\gamma+\frac{1}{2}}\right)\nonumber\\
& \quad + \epsilon\log\left(z+\gamma+\frac{1}{2}+\epsilon\right) - \epsilon + \logop\left[-\epsilon\frac{\sum_{i=1}^n\frac{c_i}{(z+i)(z+i+\epsilon)}}{c_0+\sum_{i=1}^n\frac{c_i}{z+i}}\right].\label{eq:Lanczosratio}
\end{align}

In Eqs.~\eqref{eq:Lanczos} and~\eqref{eq:Lanczosratio}, we use the coefficients $c_0,\ldots,c_n$, and the constant $\gamma$ that are reported in the accompanying software {\tt hyp\_2F1} provided by~\cite{Michel-Stoitsov-178-535-08}. Eq.~\eqref{eq:Lanczos} is only valid for $\Re (z+1) > 0$, but Euler's reflection formula may be used to extend its applicability. Another reflection formula is developed for Eq.~\eqref{eq:Lanczosratio} by~\cite{Michel-Stoitsov-178-535-08}.

\subsection{Numerical evaluation of Eq.~\eqref{eq:FourierSpectraASYb}}

Let $\{a_k\}_{k=0}^\infty$ be an infinite sequence, let $s_n = \sum_{k=0}^n a_k$ denote the $n^{\rm th}$ partial sum of the sequence, and $s$ the limit of the partial sums as $n\to\infty$ or the antilimit in case the series is divergent. Sequence transformations are a family of methods that are designed to accelerate the convergence a sequence (of partial sums) to its limit, or potentially converge to its antilimit. We refer the interested reader to the references (\cite{Drummond-6-69-72,Levin-B3-371-73,Sidi-7-37-81,Weniger-10-189-89}) and the references therein for their rich history. Many sequence transformations for the summation of series begin with the {\em ansatz} that the difference between (anti)limit and the $n^{\rm th}$ partial sum has a prescribed form; the remainder is given by a particular controlling factor $\omega_n$ and a constructive correction term $z_n$,
\begin{equation}
s - s_n = \omega_n z_n.
\end{equation}
The controlling factor is usually motivated by an analysis of the type of sequence under consideration. For example, a large class of sequence transformations developed by~\cite{Levin-Sidi-9-175-81} depends on the sequence itself satisfying an $m^{\rm th}$ order linear homogeneous difference equation with variable coefficients that have Poincar\'e-type asymptotic expansions as $n\to\infty$. In the case of generalized hypergeometric series, the sequence is given by
\begin{equation}
a_k = \frac{(\alpha_1)_k\cdots(\alpha_p)_k}{(\beta_1)_k\cdots(\beta_q)_k}\frac{z^k}{k!},
\end{equation}
and since
\begin{equation}
a_{k+1} = \frac{(\alpha_1+k)\cdots(\alpha_p+k)}{(\beta_1+k)\cdots(\beta_q+k)}\frac{z}{k+1}a_k,\qquad a_0 = 1,
\end{equation}
it is easily verified that the sequence satisfies a first-order linear homogeneous difference equation. In such instances, it is legitimate for the controlling factor to depend on the sequence itself, $\omega_n = \Delta s_n = a_{n+1}$. We refer the interested reader for the systematic construction of such methods in~\cite{Levin-Sidi-9-175-81}.

With the controlling factor in hand, several different choices of the correction term have been considered (\cite{Drummond-6-69-72,Levin-B3-371-73,Sidi-7-37-81,Weniger-10-189-89}). Drummond's sequence transformation is designed to be exact for polynomial corrections
\begin{equation}
T_n^{(k)} - s_n = \Delta s_n p_{k-1}(n),
\end{equation}
where $\deg(p_{k-1}) = k-1$ and $p_{-1}(n) \equiv 0$. The approximant $T_n^{(k)}$ may be obtained by setting up a $k\times k$ linear system that utilizes the partial sums $s_n,\ldots,s_{n+k}$. Generically solving a linear system, however, runs in $\OO(k^3)$ operations and is considered rather prohibitive. Due to the particular structure of the sequence transformation, an $\OO(k^2)$ algorithm is readily obtained by dividing by the controlling factor, and by annihilating the polynomial correction with a $k^{\rm th}$ order (forward) finite difference,
\begin{equation}\label{eq:Drummondk^2}
T_n^{(k)} = \dfrac{N_n^{(k)}}{D_n^{(k)}} = \dfrac{\Delta^k\left(\dfrac{s_n}{\Delta s_n}\right)}{\Delta^k\left(\dfrac{1}{\Delta s_n}\right)} = \dfrac{\displaystyle \sum_{j=0}^k\binom{k}{j}(-1)^{k-j} \dfrac{s_{n+j}}{a_{n+j+1}}}{\displaystyle \sum_{j=0}^k\binom{k}{j}(-1)^{k-j} \dfrac{1}{a_{n+j+1}}}.
\end{equation}

In the case of Euler's factorially divergent asymptotic series for the exponential integral, Drummond's sequence transformation is known to produce the type $[n+k/k]$ Pad\'e approximant, see for example~\cite{Borghi-Weniger-94-149-15}. By identifying Euler's series with a Stieltjes moment problem whose moments satisfy Carleman's condition, the pointwise root-exponential convergence of diagonals in the table\footnote{By diagonals, we mean fixed $n$ and increasing $k$.} of Drummond's sequence transformation has been rigorously established. More generally, Drummond's sequence transformation creates a class of Pad\'e-{\em type} approximants, see~\cite{Brezinski-80}, rational approximants with a fixed (sub)set of poles, and while less is known about the convergence of Pad\'e-type approximants to analytic continuations of divergent asymptotic series, our numerical experiments suggest that Drummond's sequence transformation converges pointwise to Lommel functions of positive argument as well.

Recurrence relations have been developed by~\cite{Weniger-10-189-89} to help transform the sequence of partial sums, $\{T_{n+j}^{(0)}\}_{j=0}^k$, to the fully transformed sequences $\{T_n^{(j)}\}_{j=0}^k$. However, these only avoid the construction of binomial coefficients and do not reduce the asymptotic complexity of the procedure.

Consider the summation of
\begin{equation}\label{eq:3F0special}
a_k = (\alpha)_k(\beta)_k/(-z)^k,
\end{equation}
which appears in the computation of the Lommel functions $S_{\mu,\nu}(z)$ via their asymptotic expansion in Eq.~\eqref{eq:LommelS2}. We apply Drummond's sequence transformation and obtain the following four-term recurrence relation. As is borne out in our numerical experiments, a linear complexity recurrence relation has a significant impact on the competitiveness of Drummond's sequence transformation via-\`a-vis direct summation of the (divergent) asymptotic series, and also increases the numerical stability of the transformations compared with the $\OO(k^2)$ procedure.

\begin{theorem}\label{theorem:DrummondRec}
When applied to the sequence in Eq.~\eqref{eq:3F0special}, both numerators and denominators of Drummond's sequence transformation satisfy the recurrence relation
\begin{align}
& (\alpha+n+k+1)(\beta+n+k+1)Q_n^{(k+1)}\nonumber\\
& \quad + \left[z+k(\alpha+\beta+2n+2k+1)+(\alpha+n+k+1)(\alpha+n+k+1)\right]Q_n^{(k)}\label{eq:Drummondk}\\
& \quad + k(\alpha+\beta+2n+3k)Q_n^{(k-1)}+k(k-1)Q_n^{(k-2)} = 0,\nonumber
\end{align}
though with the different initial conditions
\begin{equation}
D_n^{(0)} = \frac{1}{a_{n+1}},\quad{\rm and}\quad D_n^{(-1)} = D_n^{(-2)} = 0,
\end{equation}
and
\begin{equation}
N_n^{(1)} = s_nD_n^{(1)} + \frac{a_{n+1}}{a_{n+2}},\quad{\rm and}\quad N_n^{(0)} = s_nD_n^{(0)},\quad{\rm and}\quad N_n^{(-1)} = 0.
\end{equation}
\end{theorem}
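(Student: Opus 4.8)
The plan is to recognize both $D_n^{(k)}$ and $N_n^{(k)}$ as $k$-th forward differences (in the index $n$) of two explicit sequences, and then to turn the first-order difference equation each sequence obeys into a recurrence in the transformation order $k$. Writing the controlling factor as $\omega_n = \Delta s_n = a_{n+1}$, Eq.~\eqref{eq:Drummondk^2} gives $D_n^{(k)} = \Delta^k b_n$ and $N_n^{(k)} = \Delta^k c_n$, where $b_n := 1/a_{n+1}$ and $c_n := s_n/a_{n+1}$. First I would record the first-order recurrence satisfied by $b_n$: from $a_k = (\alpha)_k(\beta)_k/(-z)^k$ one has $a_{n+2}/a_{n+1} = -(\alpha+n+1)(\beta+n+1)/z$, so with the quadratic $p(n) := (\alpha+n+1)(\beta+n+1)$,
\begin{equation}
p(n)\,b_{n+1} + z\,b_n = 0, \qquad\text{i.e.}\qquad L\,b_n = 0, \quad L := p(n)\,E + z,
\end{equation}
where $E$ is the forward shift $Ef(n)=f(n+1)$. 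A short telescoping computation then shows the numerator sequence obeys the \emph{same} operator with a constant inhomogeneity: $L\,c_n = p(n)\,s_{n+1}b_{n+1} + z\,s_n b_n = -z\,s_{n+1}b_n + z\,s_n b_n = -z\,(s_{n+1}-s_n)\,b_n = -z$, using $p(n)b_{n+1}=-zb_n$, $s_{n+1}-s_n=a_{n+1}$, and $a_{n+1}b_n=1$.

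The core of the argument is to apply $\Delta^k$ to each of $L\,b_n=0$ and $L\,c_n=-z$. Since $L$ has the variable coefficient $p(n)$, I would use the discrete Leibniz rule $\Delta^k(f\cdot g)(n) = \sum_{j=0}^k \binom{k}{j}(\Delta^j f)(n)\,(E^j\Delta^{k-j}g)(n)$; crucially, because $p$ is quadratic the sum truncates at $j=2$, leaving only the three data $p(n)$, $\Delta p(n)=2n+\alpha+\beta+3$, and $\Delta^2 p(n)=2$. Applying this to the term $p(n)\,(E Q_n)$ (with $Q=b$ or $c$) produces contributions $\binom{k}{j}(\Delta^j p)\,E^{j+1}\Delta^{k-j}Q$, and I would then eliminate all shifts in favour of differences through $E=1+\Delta$, expanding $(1+\Delta)^{j+1}\Delta^{k-j}$ by the binomial theorem. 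This rewrites everything as a combination of $Q_n^{(k+1)},Q_n^{(k)},Q_n^{(k-1)},Q_n^{(k-2)}$ at the \emph{fixed} index $n$ (the span of four indices being forced by $(1+\Delta)^3$ from the $j=2$ term). Collecting the four coefficients yields the stated recurrence \eqref{eq:Drummondk}: the coefficient of $Q_n^{(k+1)}$ assembles to $p(n+k+1)=(\alpha+n+k+1)(\beta+n+k+1)$, the coefficient of $Q_n^{(k-1)}$ to $k(\alpha+\beta+2n+3k)$, and that of $Q_n^{(k-2)}$ to $k(k-1)$; the computation gives the coefficient of $Q_n^{(k)}$ as $z+k(\alpha+\beta+2n+2k+1)+(\alpha+n+k+1)(\beta+n+k+1)$, which I would reconcile with the printed form.

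The unification of numerators and denominators then hinges on one observation: applying $\Delta^k$ to $L\,c_n=-z$ produces the same left-hand side as for $b_n$, while the right-hand side becomes $\Delta^k(-z)=0$ for every $k\ge1$. Hence $N_n^{(k)}$ satisfies the homogeneous recurrence \eqref{eq:Drummondk} for $k\ge1$, exactly as $D_n^{(k)}$ does for $k\ge0$, and evaluating the $k=0,1$ cases directly (with $\Delta^0=\mathrm{id}$ and a single application of $\Delta$) reproduces the listed initial conditions; in particular $N_n^{(1)}-s_nD_n^{(1)}=(s_{n+1}-s_n)\,b_{n+1}=a_{n+1}b_{n+1}=a_{n+1}/a_{n+2}$, and $N_n^{(0)}=s_nb_n=s_nD_n^{(0)}$. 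I expect the main obstacle to be purely the bookkeeping of the Leibniz-plus-$(1+\Delta)$ expansion: one must carry the three surviving $j$-terms, expand each $(1+\Delta)^{j+1}$, and check that the four collected coefficients match \eqref{eq:Drummondk}. The conceptual step that makes the theorem clean — noticing that the numerator's inhomogeneity is a constant annihilated by $\Delta^k$ for $k\ge1$, so that both sequences inherit the \emph{same} recurrence — is short once the explicit sequences $b_n$ and $c_n$ and their first-order equations are in hand.
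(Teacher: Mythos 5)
Your proposal is correct, and it reaches Eq.~\eqref{eq:Drummondk} by a genuinely different route from the paper. The paper argues by induction on $k$: it assumes the four-term recurrence holds at order $k$, applies a single forward difference to every term using the product rule $\Delta(f_ng_n)=f_{n+1}\Delta g_n+\Delta f_n g_n$, and re-collects the result as the recurrence at order $k+1$; the base cases are then checked directly, including the inhomogeneous relation $(\alpha+n+1)(\beta+n+1)N_n^{(1)}+\left[z+(\alpha+n+1)(\beta+n+1)\right]N_n^{(0)}+z=0$ whose constant inhomogeneity is removed by further differencing. You instead derive the recurrence in one pass: from $D_n^{(k)}=\Delta^k b_n$ and $N_n^{(k)}=\Delta^k c_n$ with $b_n=1/a_{n+1}$ and $c_n=s_n/a_{n+1}$, you apply $\Delta^k$ to the first-order equations $Lb_n=0$ and $Lc_n=-z$, where $L=p(n)E+z$ and $p(n)=(\alpha+n+1)(\beta+n+1)$, via the discrete Leibniz rule, which truncates at $j=2$ because $p$ is quadratic; converting $E^{j+1}=(1+\Delta)^{j+1}$ then spans exactly the four orders $Q_n^{(k-2)},\ldots,Q_n^{(k+1)}$. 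I checked your coefficient assembly and it is right: by the Newton expansion $p(n+k)=p(n)+k\Delta p(n)+k(k-1)$, exact for quadratics, the $Q_n^{(k+1)}$ coefficient is $p(n+k)=(\alpha+n+k+1)(\beta+n+k+1)$ (your ``$p(n+k+1)$'' is a harmless indexing slip for $p(n+k)$ under your definition of $p$), and the remaining three coefficients collect to those of Eq.~\eqref{eq:Drummondk}, confirming along the way --- as does the final line of the paper's own induction --- that the printed factor $(\alpha+n+k+1)(\alpha+n+k+1)$ in the theorem statement is a typo for $(\alpha+n+k+1)(\beta+n+k+1)$, exactly the reconciliation you anticipated. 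What your route buys: it is constructive rather than verificatory, needing no a priori knowledge of the coefficients; it explains structurally why the recurrence has precisely four terms (a quadratic coefficient plus one shift); the small-$k$ cases require no separate treatment since $\binom{k}{j}=0$ for $j>k$; and the numerator/denominator unification is isolated in the single crisp identity $Lc_n=-z$, a constant annihilated by $\Delta^k$ for $k\ge1$, which is tidier than the paper's ``further finite differencing renders the equation homogeneous.'' What the paper's induction buys is elementarity --- only the product rule for a single $\Delta$ is invoked --- at the price of having to know the recurrence in advance and offering less insight into its origin; your method also generalizes transparently to sequences whose term ratio is rational in $n$, predicting the length of the resulting recurrence from the degrees involved.
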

\begin{proof}
Recall the finite product rule
\begin{equation}
\Delta(f_ng_n) = f_{n+1}\Delta g_n + \Delta f_n g_n = f_n\Delta g_n + \Delta f_n g_{n+1}.
\end{equation}
Assume that the recurrence relation holds for some $k$. The order $k$ of either $N_n^{(k)}$ or $D_n^{(k)}$ is incremented by applying another finite difference such that $Q_n^{(k+1)} = \Delta Q_n^{(k)}$. Using the first form of the finite product rule,
\begin{align}
& \Delta\left[(\alpha+n+k+1) (\beta+n+k+1)Q_n^{(k+1)}\right]\nonumber\\
& \quad + \Delta\left\{\left[z+k(\alpha+\beta+2n+2k+1)+(\alpha+n+k+1)(\beta+n+k+1)\right]Q_n^{(k)}\right\}\\
& \quad + \Delta\left[k(\alpha+\beta+2n+3k)Q_n^{(k-1)}\right]+\Delta[k(k-1)Q_n^{(k-2)}],\nonumber\\~\nonumber\\
& = (\alpha+n+k+2) (\beta+n+k+2)Q_n^{(k+2)}\nonumber\\
& \quad + \left[(\alpha+n+k+2) (\beta+n+k+2)-(\alpha+n+k+1) (\beta+n+k+1)\right]Q_n^{(k+1)}\nonumber\\
& \quad + \left[z+k(\alpha+\beta+2n+2k+3)+(\alpha+n+k+2)(\alpha+n+k+2)\right]Q_n^{(k+1)}\\
& \quad + \left[2k+(\alpha+n+k+2)(\alpha+n+k+2)-(\alpha+n+k+1)(\beta+n+k+1)\right]Q_n^{(k)}\nonumber\\
& \quad + k(\alpha+\beta+2n+3k+2)Q_n^{(k)} + 2kQ_n^{(k-1)}+k(k-1)Q_n^{(k-1)},\nonumber\\~\nonumber\\
& = (\alpha+n+k+2)(\beta+n+k+2)Q_n^{(k+2)}\nonumber\\
& \quad + (\alpha+\beta+2n+2k+3)Q_n^{(k+1)}\nonumber\\
& \quad + \left[z+k(\alpha+\beta+2n+2k+3)+(\alpha+n+k+2)(\beta+n+k+2)\right]Q_n^{(k+1)}\\
& \quad + (2k+\alpha+\beta+2n+2k+3)Q_n^{(k)}\nonumber\\
& \quad + k(\alpha+\beta+2n+3k+2)Q_n^{(k)} + k(k+1)Q_n^{(k-1)},\nonumber\\~\nonumber\\
& = (\alpha+n+k+2)(\beta+n+k+2)Q_n^{(k+2)}\nonumber\\
& \quad + \left[z+(k+1)(\alpha+\beta+2n+2k+3)+(\alpha+n+k+2)(\beta+n+k+2)\right]Q_n^{(k+1)}\\
& \quad + (k+1)(\alpha+\beta+2n+3k+3)Q_n^{(k)} + k(k+1)Q_n^{(k-1)} = 0.\nonumber
\end{align}
For the particular sequence at hand,
\begin{equation}
\frac{a_{n+1}}{a_{n+2}} = -\frac{z}{(\alpha+n+1)(\beta+n+1)}.
\end{equation}
Therefore, we may begin the denominator recurrence via
\begin{equation}
D_n^{(0)} = \frac{1}{a_{n+1}} = \frac{(-z)^{n+1}}{(\alpha)_{n+1}(\beta)_{n+1}},
\end{equation}
and
\begin{equation}
D_n^{(1)} = \frac{1}{a_{n+2}}-\frac{1}{a_{n+1}} = \left(\frac{a_{n+1}}{a_{n+2}}-1\right)\frac{1}{a_{n+1}} = -\left[\frac{z}{(\alpha+n+1)(\beta+n+1)}+1\right]D_n^{(0)},
\end{equation}
or
\begin{equation}
(\alpha+n+1)(\beta+n+1)D_n^{(1)} + \left[z+(\alpha+n+1)(\beta+n+1)\right]D_n^{(0)} = 0.
\end{equation}
But this is simply the recurrence relation with $k=0$.

Similarly, the numerator recurrence is begun with
\begin{equation}
N_n^{(0)} = \frac{s_n}{a_{n+1}} = s_n D_n^{(0)}.
\end{equation}
Using the second form of the finite product rule,
\begin{align}
N_n^{(1)} & = s_n \Delta D_n^{(0)} + \Delta s_n D_{n+1}^{(0)} = s_n D_n^{(1)} + \frac{\Delta s_n}{\Delta s_{n+1}}\\
& = -\left[\frac{z}{(\alpha+n+1)(\beta+n+1)}+1\right] s_n D_n^{(0)} + \frac{a_{n+1}}{a_{n+2}}\\
& = -\left[\frac{z}{(\alpha+n+1)(\beta+n+1)}+1\right] N_n^{(0)} - \frac{z}{(\alpha+n+1)(\beta+n+1)},
\end{align}
or
\begin{equation}
(\alpha+n+1)(\beta+n+1)N_n^{(1)} + \left[z+(\alpha+n+1)(\beta+n+1)\right]N_n^{(0)} + z = 0.
\end{equation}
Further finite differencing renders the equation homogeneous, and thus coincides with the recurrence relation for $k\ge1$.
\end{proof}

As both numerator and denominator of Drummond's sequence transformation satisfy the same four-term recurrence relation with different initial conditions, the algorithm is then a race for the maximal solutions of the recurrence relation, potentially present in both numerator and denominator, to dominate over any amount of the minimal solutions of the recurrence relation present in both numerator and denominator, up to a pre-determined absolute or relative accuracy.

Figure~\ref{fig:stability} applies Drummond's sequence transformation to Eq.~\eqref{eq:3F0special} with $\alpha=\beta=1$ and $z=8$ using both the finite difference formul\ae~in Eq.~\eqref{eq:Drummondk^2} and the linear complexity recurrence just derived in Eq.~\eqref{eq:Drummondk}. The numerical results are compared with values that are computed in extended precision, using $256$-bit floating-point arithmetic. It is clear that the linear complexity recurrence relations improve the stability of the algorithm to the point where a simple stopping criterion may be created. In particular, if the computed values of the sequence satisfy
\begin{equation}
|T_n^{(k)} - T_n^{(k-1)}| < 10|T_n^{(k)}|\epsilon_{\rm mach}\quad{\rm and}\quad |T_n^{(k-1)} - T_n^{(k-2)}| < 10|T_n^{(k-1)}|\epsilon_{\rm mach},
\end{equation}
where $\epsilon_{\rm mach} \approx 2.22\times10^{-16}$, then the recurrence is exited, returning the value of $T_n^{(k)}$. Compared with, e.g.~\cite[\S 4]{Gaudreau-Slevinsky-Safouhi-34-B65-12}, this is a rather simple test for convergence that appears to work well in practice.

\begin{figure}[htbp]
\begin{center}
\begin{tabular}{cc}
\hspace*{-0.6cm}\includegraphics[width=0.53\textwidth]{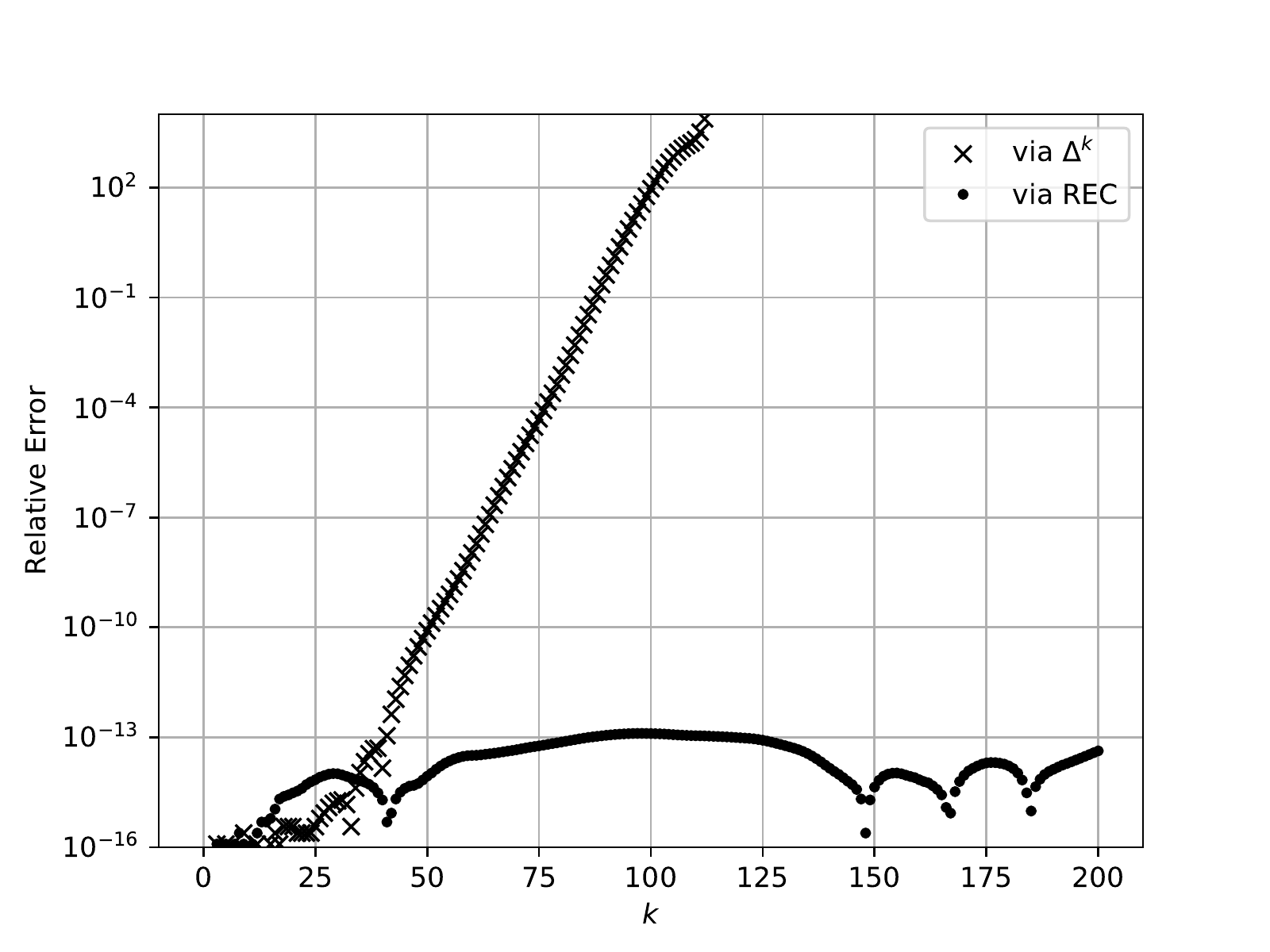}&
\hspace*{-1.1cm}\includegraphics[width=0.53\textwidth]{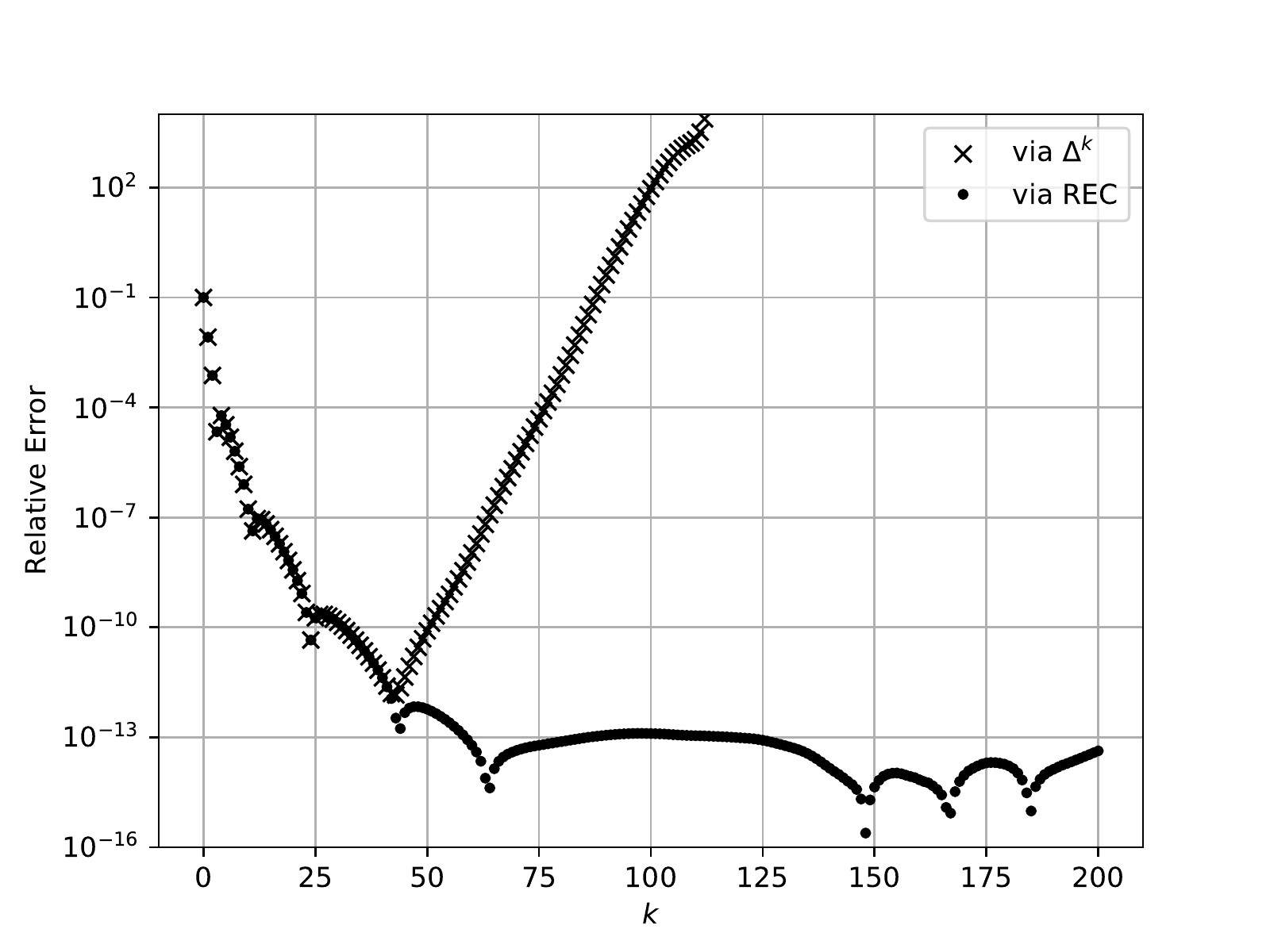}\\
\end{tabular}
\caption{Stability of $T_0^{(k)}$ summing Eq.~\eqref{eq:3F0special} for $\alpha=\beta=1$ and $z=8$, as implemented by Eq.~\eqref{eq:Drummondk^2} and Eq.~\eqref{eq:Drummondk}. Left: the relative error with an extended precision evaluation of $T_0^{(k)}$. Right: the relative error with an extended precision evaluation of a ``true solution,'' $T_0^{(2000)}$, which appears to satisfy $|T_0^{(2000)}-T_0^{(1999)}|/|T_0^{(2000)}| < 1.54\times10^{-49}$.}
\label{fig:stability}
\end{center}
\end{figure}

The recurrence relation in Eq.~\eqref{eq:Drummondk} is equally valid for $\alpha, \beta, z\in\C$. Since the sequence $a_k$ in Eq.~\eqref{eq:3F0special} is terminating if $\alpha$ or $\beta$ is a non-positive integer, we expect the transformation to be exact for such sequences. Indeed, this is the case, and the recurrence relation in Eq.~\eqref{eq:Drummondk} also terminates for such parameter values. There is no numerical difficulty in evaluating the recurrence relations for $z=0$, though the singularity in the series and its analytic continuation via Lommel functions prevents any meaningful approximation from being extracted.

For alternating power series such as Eq.~\eqref{eq:3F0special}, it is readily verified that the denominators in Drummond's sequence transformation are proportional to polynomials with positive coefficients. This ensures that the poles of the approximant are off the positive real axis. When a function has a branch cut, it is well known that rational approximations tend to localize their roots and poles alongside to mimic the cut. This is visible in Figure~\ref{fig:ComplexPhasePortrait}, where the factorially divergent power series in Eq.~\eqref{eq:3F0special} has a branch cut along the negative real axis. For Pad\'e approximants to Stieltjes series, the denominators of diagonal sequences the Pad\'e table may be related to orthogonal polynomials with respect to a positive measure on the branch cut. In such cases, it has been shown that the roots and poles of the Pad\'e approximants are on the cut (and nowhere else in the complex plane). Less is known about the singularity structure of Pad\'e-type approximants.

\begin{figure}[htbp]
\begin{center}
\hspace*{-0.2cm}\includegraphics[width=\textwidth]{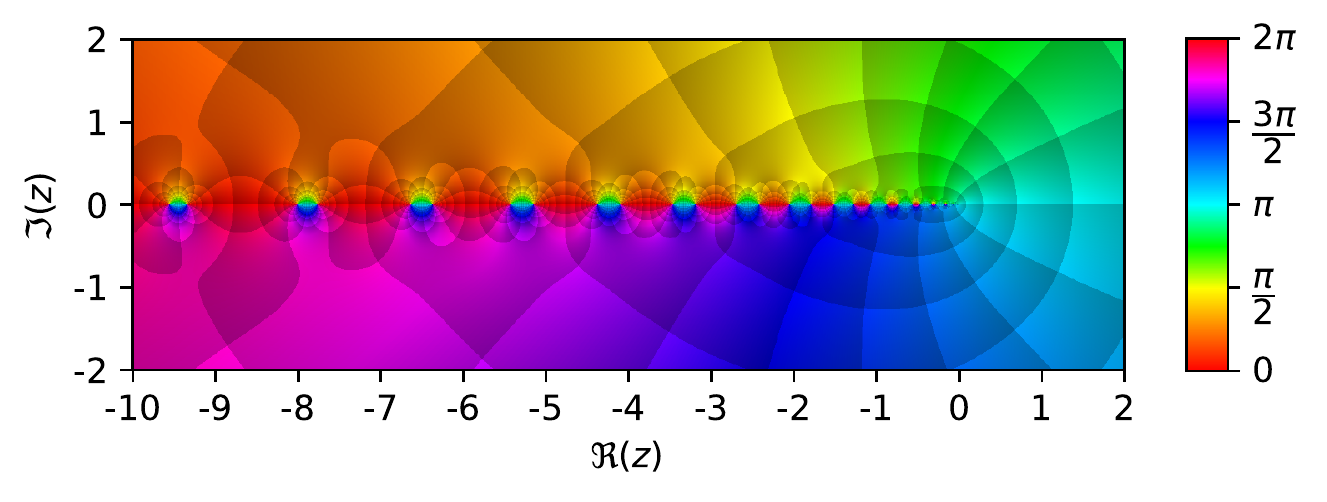}
\caption{Complex phase portrait of $T_0^{(1000)}(z)-1$ approximating the summation of Eq.~\eqref{eq:3F0special} for $\alpha=\beta=1$. This phase portrait illustrates that the roots and poles of the Pad\'e-type approximant appear to interlace and are aligned along the negative real axis, simulating the branch cut of Eq.~\eqref{eq:3F0special}. The complex phase portrait of $T_0^{(1000)}(z)-1$ is shown instead of $T_0^{(1000)}(z)$ to pronounce the contrast as one winds about the origin; the contrast in $T_0^{(1000)}(z)$ itself is less pronounced and the roots are even closer to the poles.}
\label{fig:ComplexPhasePortrait}
\end{center}
\end{figure}

\section{Numerical Discussion}

In Figure~\ref{fig:lambdaerror}, the pointwise relative error in the double precision numerical evaluation of $\lambda_\delta(\kb)$ by both the Maclaurin series and the asymptotic formula is presented for dimensions $d=1,2,3$ and all admissible singularity strengths $\alpha\in[0,d+2)$. The numerical results are compared with the Maclaurin series evaluated in extended precision, using $256$-bit floating-point arithmetic. It appears that the heuristic $k\delta=6$ separates the accurate and inaccurate regions of each algorithm, uniformly for every $d$ and all $\alpha$. This makes a unified algorithm directly available: if $\abs{k\delta}<6$, then we use the Maclaurin series; otherwise, we use the asymptotic formula. On a MacBook Pro (Mid 2014) with a 2.8 GHz Intel Core i7-4980HQ processor, the computation of $\lambda_\delta(\kb)$ with $k\delta = 6$, $\alpha=2$, and $d=3$ takes approximately $0.12\,\mu{\rm s}$ via the Maclaurin series and $4.0\,\mu{\rm s}$ via the asymptotic formula, averaging results over $10^6$ evaluations. We choose the value $k\delta = 6$ because this value roughly requires the most flops.

\begin{figure}[htbp]
\begin{center}
\includegraphics[width=\textwidth]{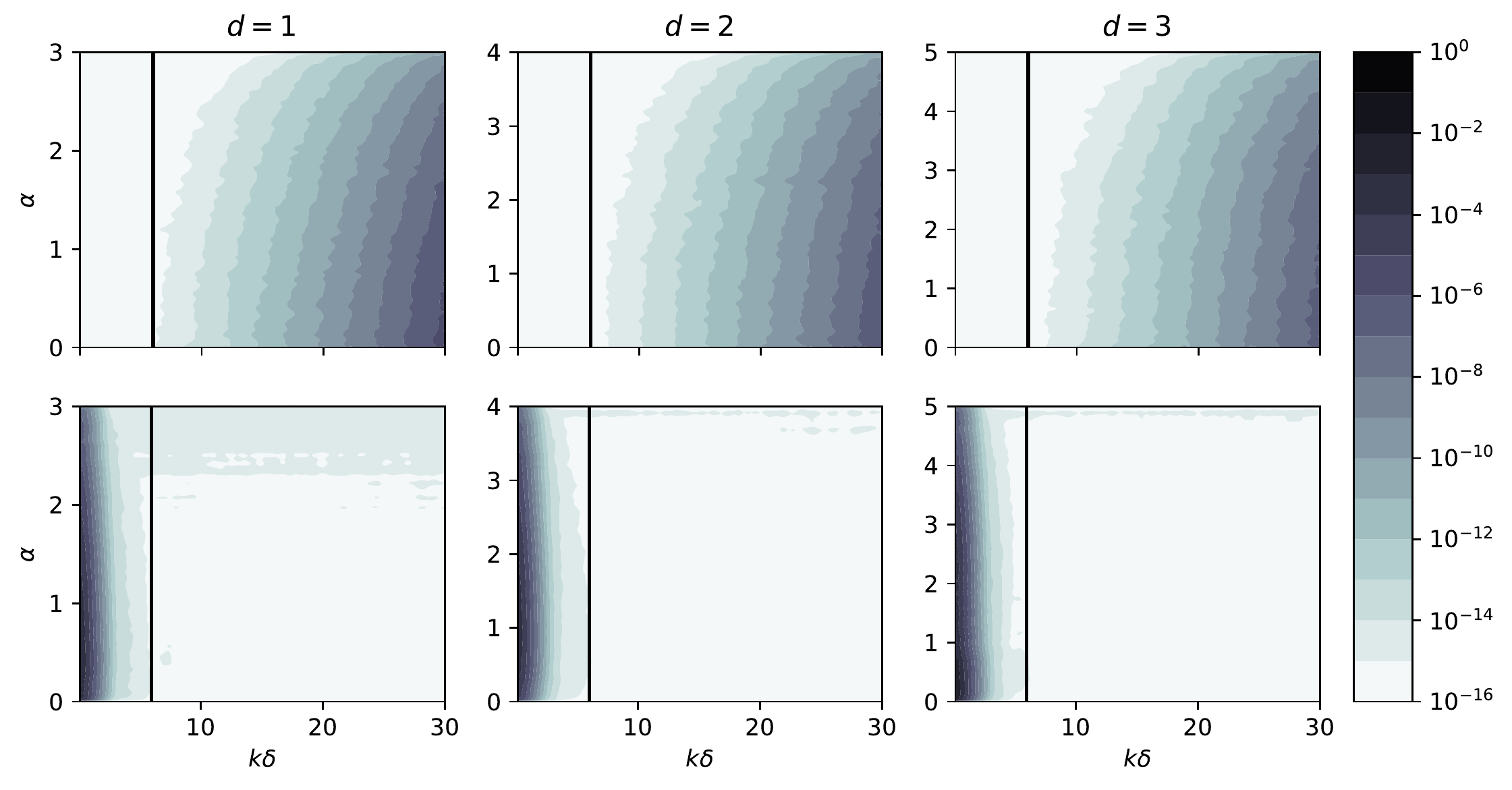}
\caption{Pointwise relative error compared with an extended precision evaluation of $\lambda_\delta(\kb)$ of Top: the Maclaurin series. Bottom: the asymptotic series resummed by Drummond's transformation. In all plots, the black vertical line $k\delta = 6$ is a heuristic division between the accurate and inaccurate regions of each algorithm.}
\label{fig:lambdaerror}
\end{center}
\end{figure}

Numerically, the main advantage of the algorithm in this paper over the hybrid algorithm of~\cite{Du-Yang-332-118-17} is that the evaluation of the Fourier spectra are now trivially parallelizable, unleashing the full potential of modern supercomputers. Of secondary importance, the relative error of each eigenvalue is independently controllable; this cannot be done using a time-stepping scheme for the linear ordinary differential equation that is satisfied by the generalized hypergeometric function in Eq.~\eqref{eq:FourierSpectraMAC}. Furthermore, our codes are freely available in the {\sc Julia} package {\tt SingularIntegralEquations.jl}, which implements nonlocal diffusion as an infinite-dimensional banded (diagonal) operator with the deferred evaluation of the entries occurring on a need-to-be-evaluated basis. Combined with time-stepping schemes for equations of evolution, this framework allows for the simulation of many nonlocal phenomena, some of which are explored in~\cite{Du-Yang-332-118-17}.

Many extensions of this work are natural. In multiple dimensions, nonlocal operators of vector calculus developed by~\cite{Du-et-al-23-493-13} define similar eigenvalue problems when acting on the Fourier basis. Applications of nonlocal calculus may involve kernels that do not have a weak algebraic singularity, such as a logarithmic singularity or a Gaussian envelope. This is reasonable since the family of permissible kernels may be determined by admissibility in a function space rather than having a particular analytic structure. However, weakly singular algebraic kernels will always be important due to their correspondence with bounded operators in between Sobolev spaces. Furthermore, it is also of interest to examine the effect of applying nonlocal operators to different bases; orthogonal polynomial bases such as Chebyshev and Legendre (Jacobi polynomials in general) are ubiquitous in numerical analysis and are anticipated to be useful to solve univariate nonlocal problems without periodicity. It is not inconceivable that robust numerical algorithms may be developed in these cases as well based on Maclaurin and resummed asymptotic series.

Theorem~\ref{theorem:DrummondRec} offers the possibility of sequence transformations such as Drummond's or others of the same structure (\cite{Drummond-6-69-72,Levin-B3-371-73,Sidi-7-37-81,Weniger-10-189-89}) to be stabilized by linear complexity recurrence relations. What are the general classes of sequences for which such relations are obtainable? Bespoke tailoring of sequence transformations with the same structure has been used by~\cite{Sidi-7-37-81} to create Pad\'e approximants to some generalized hypergeometric functions; can this methodology be generalized to generate Pad\'e approximants to any $\pFq{p}{q}$? Diagonal sequences in the Pad\'e table approximating Stieltjes series satisfy the three-term recurrence relation corresponding to the orthogonal polynomials and their associated polynomials with respect to the Stieltjes measure. However, other than a few exceptional circumstances such as for classical orthogonal polynomials, the computation of recurrence {\em coefficients} cannot be taken for granted. On the other hand,~\cite{Borghi-Weniger-94-149-15} prove that Pad\'e-type approximants with a judicious choice of poles have favourable convergence properties over Pad\'e approximants, notwithstanding the higher order conditions satisfied by the latter. Due to the prominence of $\pFq{p}{q}$ in the theory of special functions, we believe it is worthwhile to explore stable and reliable linear complexity algorithms for their Pad\'e and Pad\'e-type approximants.

\section*{Acknowledgments}

We thank Hadrien Montanelli and Qiang Du for discussions on nonlocal diffusion. We thank the Natural Sciences and Engineering Research Council of Canada (RGPIN-2017-05514) for supporting this research.

\bibliography{/Users/Mikael/Bibliography/Mik}

\end{document}